\newtheorem{theorem}{Theorem}
\newtheorem{proposition}{Proposition}
\theoremstyle{definition}
\newtheorem{definition}{Definition}
\newtheorem{assumption}{Assumption}
\newtheorem*{problem*}{Problem}
\theoremstyle{remark}
\newtheorem*{remark}{Remark}
\newcommand{\ba}{\begin{align}}
\newcommand{\ea}{\end{align}}
\newcommand{\R}{{\mathbb R}}
\newcommand{\TODO}[1]{{\color{red}#1}}
\DeclareMathOperator*{\rank}{rank}
\newcommand{\NA}[1]{$\spadesuit$\footnote{\color{red}{Nikolay}: #1}}
\newcommand{\calB}{{\cal B}}
\newcommand{\calC}{{\cal C}}
\newcommand{\calD}{{\cal D}}
\newcommand{\calE}{{\cal E}}
\newcommand{\calH}{{\cal H}}
\newcommand{\calK}{{\cal K}}
\newcommand{\calL}{{\cal L}}
\newcommand{\calQ}{{\cal Q}}
\newcommand{\calS}{{\cal S}}
\newcommand{\calU}{{\cal U}}
\newcommand{\calV}{{\cal V}}
\newcommand{\calX}{{\cal X}}
\newcommand{\bfa}{\mathbf{a}}
\newcommand{\bfb}{\mathbf{b}}
\newcommand{\bfe}{\mathbf{e}}
\newcommand{\bff}{\mathbf{f}}
\newcommand{\bfq}{\mathbf{q}}
\newcommand{\bfr}{\mathbf{r}}
\newcommand{\bfu}{\mathbf{u}}
\newcommand{\bfw}{\mathbf{w}}
\newcommand{\bfx}{\mathbf{x}}
\newcommand{\bfy}{\mathbf{y}}
\newcommand{\bfz}{\mathbf{z}}
\newcommand{\bfA}{\mathbf{A}}
\newcommand{\bfC}{\mathbf{C}}
\newcommand{\bfF}{\mathbf{F}}
\newcommand{\bfG}{\mathbf{G}}
\newcommand{\bfN}{\mathbf{N}}
\newcommand{\bbR}{\mathbb{R}}
\title{\LARGE \bf Control Synthesis for Stability and Safety by Differential Complementarity Problem}
\author{Yinzhuang Yi, Shumon Koga, Bogdan Gavrea, and Nikolay Atanasov
	\thanks{We gratefully acknowledge support from NSF RI IIS-2007141.}%
	\thanks{Y. Yi, S. Koga, and N. Atanasov are with the Department of Electrical and Computer Engineering, UC San Diego, 9500 Gilman Drive, La Jolla, CA, 92093, USA (e-mails: {\tt \{yiyi,skoga,natanasov\}@ucsd.edu}).}%
	\thanks{B. Gavrea is with the Department of Mathematics, Technical University of Cluj-Napoca, Cluj-Napoca, 400114, Romania (e-mail: {\tt bogdan.gavrea@math.utcluj.ro}).}
}
\begin{document}
    \def\BibTeX{{\rm B\kern-.05em{\sc i\kern-.025em b}\kern-.08em
    T\kern-.1667em\lower.7ex\hbox{E}\kern-.125emX}}
    \markboth{\journalname, VOL. XX, NO. XX, XXXX 2017}
    {YI \MakeLowercase{\textit{et al.}}: Control Synthesis for Stability and Safety by Differential Complementarity Problem}
	\maketitle
	\thispagestyle{empty}
	\begin{abstract}
	This paper develops a novel control synthesis method for safe stabilization of control-affine systems as a Differential Complementarity Problem (DCP). Our design uses a control Lyapunov function (CLF) and a control barrier function (CBF) to define complementarity constraints in the DCP formulation to certify stability and safety, respectively. The CLF-CBF-DCP controller imposes stability as a soft constraint, which is automatically relaxed when the safety constraint is active, without the need for parameter tuning or optimization. We study the closed-loop system behavior with the CLF-CBF-DCP controller and identify conditions on the existence of local equilibria. Although in certain cases the controller yields undesirable local equilibria, those can be confined to a small subset of the safe set boundary by proper choice of the control parameters. Then, our method can avoid undesirable equilibria that CLF-CBF quadratic programming techniques encounter.
	\end{abstract}

    \begin{IEEEkeywords}
    Constrained control, Stability of nonlinear systems, Differential-algebraic systems
    \end{IEEEkeywords}

\section{Introduction}

\IEEEPARstart{S}{tability} verification and stabilizing control design are fundamental problems in control theory that have impacted numerous industrial systems. One of the main tools for constructing control laws that stabilize nonlinear systems is a control Lyapunov function (CLF)~\cite{arstein,sontag1989universal}. As control systems are increasingly deployed in less structured, yet safety-critical settings, in addition to stability, control designs need to guarantee safety. Inspired by the property of CLFs to yield invariant level sets, control barrier functions (CBFs) \cite{ames2017clfcbfqp} have been developed to enforce that a desired safe subset of the state space is invariant. While various techniques for guaranteeing safety and stability exist, important challenges remain when both requirements are considered simultaneously. They include conditions under which it is possible to obtain a single control policy that guarantees CLF stability and CBF safety simultaneously (compatibility \cite{pol2022roa}), existence and uniqueness of the closed-loop system trajectories (well-posedness), convergence to points other than the origin (undesired equilibria), identification of initial conditions that ensure joint stability and safety (region of attraction).



\begin{figure}[t]
	\centering
	\includegraphics[width=\linewidth,trim={13mm 6mm 13mm 8mm}, clip]{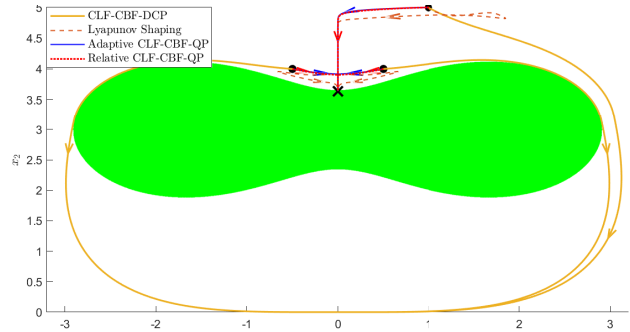}
	\caption{Comparison of the closed-loop trajectories resulting from our CLF-CBF-DCP control approach (\textbf{\textcolor{orange}{orange}}), two recent CLF-CBF-QP techniques
    (\textbf{\textcolor{blue}{blue}} \cite{pol2022roa} and \textbf{\textcolor{red}{red}} \cite{tan2021undesired}) and Lyapunov shaping (\textbf{\textcolor{brown}{brown}} \cite{reis2020control}) with a concave obstacle (\textbf{\textcolor{green}{green}}) for three initial conditions (\textbf{black} dots). Existing CLF-CBF-QP techniques converge to an undesired equilibrium on the obstacle boundary (\textbf{black} cross). Lyapunov shaping converges to a different undesired equilibrium on the obstacle boundary near the black cross for different initial conditions. Our approach stabilizes the system at the origin.}
	\label{fig: pre}
\end{figure}

The design of stabilizing control with guaranteed safety is studied in \cite{clbf}. The authors define a Control Lyapunov Barrier Function (CLBF), whose existence enables joint stabilization and safety. However, constructing a CLBF may be challenging and may require modification of the safe set. A comprehensive overview of CBF techniques and their use as safety constraints in quadratic programs (QPs) for control synthesis is provided in \cite{ames2017clfcbfqp}. Garg and Panagou~\cite{garg2021robust} propose a fixed-time CLF and analyze conditions for finite-time stabilization to a region of interest with safety guarantees. Local asymptotic stability for a particular CLF-CBF QP was proven in \cite{jankovic2018robust}. Reis et al.~\cite{reis2020control} show that a CLF-CBF-QP controller introduces equilibria other than the origin. The authors develop a new formulation by introducing a new parameter in the CLF constraint and an additional CBF constraint, aimed at avoiding undesired equilibria on the safe set boundary. In \cite{tan2021undesired}, it is shown that minimizing the distance to a nominal controller satisfying the CLF condition as the objective of a CLF-CBF QP ensures local stability of the origin without any additional assumptions. However, the region of attraction of this controller has not be characterized yet. Compatibility between CLF and CBF constraints is considered in \cite{shaw2022croa}. The authors define a CBF-stabilizable sublevel set of the CLF and characterize the conditions under which the closed-loop system is asymptotically stable with respect to the origin. Mestres and Cort{\'e}s \cite{pol2022roa} develop a penalty method to incorporate the CLF condition as a soft constraint in a QP, identify conditions on the penalty parameter that eliminate undesired equilibria, and provide an inner approximation of the region of attraction, where joint stability and safety is ensured. As illustrated in Fig.~\ref{fig: pre}, however, the regions of attraction guaranteed by recent CLF-CBF-QP techniques remain conservative, and initial conditions inside the safe set may still converge to undesired equilibria.

Our first contribution is a formulation of control synthesis with CLF stability and CBF safety constraints as a \emph{Differential Complementarity Problem} (DCP). A DCP is composed of an ODE subject to complementarity constraints:
\begin{equation}\label{eq:generalDCP}
\begin{aligned}
    & \dot{\bfy} = \hat{\bff}( \bfy,\bfz) \\
    \mathrm{s.t.} \, \, & 0 \leq \bfz \perp \bfC(\bfy,\bfz)\geq 0,
\end{aligned}
\end{equation}
where $\bfa\perp \bfb$ means that $\bfa$ and $\bfb$ are orthogonal vectors, i.e., $\bfa^{\top} \bfb = 0$. Complementarity problems are used to model combinatorial constraints in nonlinear optimization \cite{scheel2000mathematical} or hybrid dynamics in mechanical systems with contact \cite{posa2014direct}. We formulate a DCP in which the closed-loop dynamics are subject to constraints that certain control terms are activated only when the state may violate the CLF or CBF conditions. 


Our second contribution is an analysis of the closed-loop system equilibria, showing that our CLF-CBF-DCP controller can eliminate undesired equilibria on the safe set boundary that CLF-CBF-QP methods encounter (see Fig.~\ref{fig: pre}). We prove that our controller is Lipschitz continuous, ensuring existence and uniqueness of solutions, and show that the set of undesired equilibria can be restricted to a small subset of the safe set boundary by choosing a sufficiently large control gain parameter. A related work in the DCP literature is Camlibel et al. \cite{camlibel2007lyapunov}, which derives conditions for Lyapunov stability of linear DCP. While DCP formulations have potential to capture switching behavior in dynamical systems, they have not yet been used for safe control design. This paper introduces a new idea to model the mode switch between stability and safety satisfaction as a DCP.

\section{Preliminaries}



Consider a nonlinear control-affine system:
\begin{equation} \label{dynamics: Sys_dynamics}
	\dot{\bfx} = \bff(\bfx) + \bfG(\bfx)\bfu
\end{equation}
with state $\bfx \in \bbR^n$ and control input $\bfu \in \bbR^m$, where $\bff : \bbR^n \to \bbR^n$ and $\bfG : \bbR^n \to \bbR^{n \times m}$ are locally Lipschitz continuous. Assume that $\bff(\mathbf{0}) = \mathbf{0}$ so that the origin $\bfx = \bf0$ is the desired equilibrium of the unforced system. A typical objective is to design a stabilizing controller for the system. Stability of the closed-loop system is often verified by a Lyapunov function, while a stabilizing controller can be obtained using a control Lyapunov function.

%
%


\begin{definition} \label{CLF}
Given an open and connected set $\calD \subseteq \bbR^n$ with $\mathbf{0} \in \calD$, a continuously differentiable positive-definite function $V: \bbR^n \to \bbR$ is a \emph{control Lyapunov function} (CLF) on $\calD$ for system \eqref{dynamics: Sys_dynamics} if for each $\bfx \in \calD \setminus \{ \mathbf{0} \}$ it satisfies:
	\begin{equation} \label{criteria: CLF}
		\inf_{\bfu \in \bbR^m} [ \calL_{\bff} V(\bfx) + \calL_{\bfG} V(\bfx) \bfu ] \leq -\alpha_l(V(\bfx)),
	\end{equation}
where $\alpha_l: \bbR_{\geq 0} \to \bbR_{\geq 0}$ is a class $\calK$ function \cite{khalil1996nonlinear}.
\end{definition}

The set of stabilizing control inputs at state $\bfx \in \calD\setminus\{\mathbf{0}\}$, corresponding to a valid CLF $V$ on $\calD$, is:
\begin{equation*}
	K_{\textrm{clf}}(\bfx) = \{ \bfu \in \bbR^m: \calL_{\bff} V(\bfx) + \calL_{\bfG} V(\bfx) \bfu \leq -\alpha_l(V(\bfx)) \},
\end{equation*}
in the sense that any Lipschitz continuous control law $\bfr: \calD \mapsto \bbR^m$ such that $\bfr(\bfx) \in K_{\textrm{clf}}(\bfx)$ makes the origin of the closed-loop system asymptotically stable \cite{sontag2013mathematical}.

Beyond stability, it is often necessary to ensure that the system trajectories remain within a safe set $\calC \subset \calD$, in the sense that $\calC$ is forward invariant \cite{ames2019cbf}. We consider a closed set $\calC := \{\bfx \in \bbR^n | h(\bfx) \geq 0\}$, defined as the zero-superlevel set of a continuously differentiable function $h: \bbR^n \to \bbR$. One way to guarantee that $\calC$ is forward invariant is to require that $h$ is a control barrier function. Details can be found in the comprehensive work by Ames at el.~\cite{ames2019cbf}.

\begin{definition}\label{CBF}
A continuously differentiable function $h(\bfx): \bbR^n \to \bbR$ is a \emph{control barrier function} (CBF) of $\calC$ on $\calD$ for system \eqref{dynamics: Sys_dynamics} if it satisfies: 
	%
	\begin{equation} \label{criteria: CBF}
		\sup_{\bfu \in \bbR^m} [ \calL_{\bff} h(\bfx) + \calL_{\bfG} h(\bfx) \bfu ] \geq -\alpha_h(h(\bfx)), \ \forall \bfx \in \calD, 
	\end{equation}
	where $\alpha_h: \bbR \to \bbR$ is an extended class $\calK$ function.
\end{definition}

The set of safe control inputs at state $\bfx$, corresponding to a valid CBF $h$ on $\calD$, is:
\begin{equation}
	\nonumber
	K_{\textrm{cbf}}(\bfx) = \{ \bfu \in \bbR^m: \calL_{\bff} h(\bfx) + \calL_{\bfG} h(\bfx) \bfu \geq -\alpha_h(h(\bfx)) \},
\end{equation}
in the sense that any Lipschitz control law $\bfr: \calD \mapsto \bbR^m$ such that $\bfr(\bfx) \in K_{\textrm{cbf}}(\bfx)$ renders the set $\calC$ forward invariant \cite{xu2015robustness}.


Finding a single control input $\bfu$ achieving both stability and safety may be infeasible. In order to guarantee safety when \eqref{criteria: CLF} and \eqref{criteria: CBF} are not compatible \cite{universal-formula-safety}, a popular approach is to modify a stabilizing controller minimally so as to guarantee safety \cite{ames2019cbf}. Given a locally Lipschitz stabilizing control law $\bfr(\bfx) \in K_{\textrm{clf}}(\bfx)$, the following CBF QP obtains the minimum control perturbation to guarantee safety:
\begin{equation}\label{controller: CBF-QP}
	\begin{aligned}
		\min_{\bfu \in \bbR^m} &\,\| \bfu - \bfr(\bfx) \|_2^2  \\
		\mathrm{s.t.} \, &\, \calL_{\bff} h(\bfx) + \calL_{\bfG} h(\bfx) \bfu \geq -\alpha_h(h(\bfx)).
	\end{aligned}
\end{equation}
%

\section{Problem Statement}\label{sec: prob}

In agreement with the results in \cite{reis2020control,shaw2022croa,pol2022roa,tan2021undesired}, we note that a control law synthesized by the CBF QP in \eqref{controller: CBF-QP} introduces undesired local equilibria on the boundary of the safe set $\calC$ for the system in \eqref{dynamics: Sys_dynamics}. Hence, our objective is to design a new control synthesis method, which simultaneously guarantees safety and prevents undesired local equilibria for the closed-loop system.

Suppose the assumption below holds throughout the paper.

\begin{assumption}\label{assumption:cbf}
The safety requirements for system \eqref{dynamics: Sys_dynamics} are specified by a set $\calC = \{\bfx \in \bbR^n | h(\bfx) \geq 0\} \subset \calD$ such that $h$ is a CBF of $\calC$ with $h(\mathbf{0}) > 0$ and $\calL_{\bfG} h(\bfx) \neq \mathbf{0}$, $\forall \bfx \in \calD$.
\end{assumption}

Assumption~\ref{assumption:cbf} requires that the safe set $\calC$ is defined by a CBF with relative degree $1$ and the origin is in the interior of $\calC$. Under this assumption, the CBF-QP control law in \eqref{controller: CBF-QP} can be obtained in closed-form using the KKT conditions.

\begin{proposition}[{\cite[Thm.~1]{shaw2022croa}}]\label{sol: QP}
Consider system \eqref{dynamics: Sys_dynamics} with CLF $V$ on $\calD$ and CBF $h$ that satisfies Assumption \ref{assumption:cbf}. Then, the CBF QP in \eqref{controller: CBF-QP} has a unique closed-form solution:
\begin{equation}\label{sol: CBF-QP}
	\bfu^*(\bfx) = \left\{
	\begin{aligned}
		&\bfr(\bfx), \ &\nabla h(\bfx)^\top \bfe(\bfx) + \alpha_h(h(\bfx)) \geq 0, \\
		&\bar{\bfu}(\bfx), \ &\nabla h(\bfx)^\top \bfe(\bfx) + \alpha_h(h(\bfx)) < 0,
	\end{aligned} \right.
\end{equation}
where $\bfe(\bfx) := \bff(\bfx) + \bfG(\bfx)\bfr(\bfx)$ and
\begin{equation}
	\bar{\bfu}(\bfx) = \bfr(\bfx) - \frac{\nabla h(\bfx)^\top \bfe(\bfx) + \alpha_h(h(\bfx))}{\| \calL_{\bfG} h(\bfx) \|_2^2} \calL_{\bfG} h(\bfx)^\top.
\end{equation}
\end{proposition}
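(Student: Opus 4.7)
The plan is to apply the KKT conditions to the single-constraint quadratic program in \eqref{controller: CBF-QP}. Since the objective is strictly convex (Hessian equal to $2\bfI$) and the constraint is affine in $\bfu$, the problem satisfies Slater's condition (a strictly feasible $\bfu$ always exists because $\calL_{\bfG} h(\bfx)\neq \mathbf{0}$ by Assumption~\ref{assumption:cbf}), so KKT is both necessary and sufficient and the minimizer is unique. This immediately justifies the closed-form claim; only the two branches remain to be identified.

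First I would write the Lagrangian
\begin{equation*}
L(\bfu,\lambda) = \|\bfu - \bfr(\bfx)\|_2^2 - \lambda\bigl(\calL_{\bff} h(\bfx) + \calL_{\bfG} h(\bfx)\bfu + \alpha_h(h(\bfx))\bigr),
\end{equation*}
and derive the stationarity condition $\bfu = \bfr(\bfx) + \tfrac{\lambda}{2}\calL_{\bfG} h(\bfx)^\top$ together with primal feasibility, $\lambda\ge 0$, and complementary slackness. Using $\calL_{\bff} h(\bfx) + \calL_{\bfG} h(\bfx)\bfr(\bfx) = \nabla h(\bfx)^\top \bfe(\bfx)$, which is the key algebraic identity that rewrites the constraint in terms of $\bfe(\bfx)$, I can analyze the two KKT cases.

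In the inactive case $\lambda = 0$, stationarity gives $\bfu^\ast = \bfr(\bfx)$, and primal feasibility reduces to $\nabla h(\bfx)^\top \bfe(\bfx) + \alpha_h(h(\bfx)) \ge 0$, recovering the first branch of \eqref{sol: CBF-QP}. In the active case $\lambda > 0$, complementary slackness forces the CBF inequality to hold with equality; substituting the stationarity expression for $\bfu$ into this equation and solving for $\lambda$ yields
\begin{equation*}
\lambda = -\,\frac{2\bigl(\nabla h(\bfx)^\top \bfe(\bfx) + \alpha_h(h(\bfx))\bigr)}{\|\calL_{\bfG} h(\bfx)\|_2^2},
\end{equation*}
which is well-defined precisely because $\calL_{\bfG} h(\bfx)\neq \mathbf{0}$. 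The requirement $\lambda>0$ corresponds to $\nabla h(\bfx)^\top \bfe(\bfx) + \alpha_h(h(\bfx)) < 0$, and plugging $\lambda$ back into the stationarity equation produces the expression $\bar{\bfu}(\bfx)$ stated in the proposition.

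The argument is essentially bookkeeping, so there is no substantial obstacle; the only care point is verifying that the two cases cover the boundary value $\nabla h(\bfx)^\top \bfe(\bfx) + \alpha_h(h(\bfx)) = 0$ consistently (both formulas agree there, giving $\bfu^\ast = \bfr(\bfx)$), and confirming that uniqueness rules out any additional minimizer, which follows from strict convexity of the objective.
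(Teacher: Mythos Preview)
Your proposal is correct and follows exactly the KKT route the paper indicates (``can be obtained in closed-form using the KKT conditions''); the paper itself does not write out a proof but cites \cite[Thm.~1]{shaw2022croa} for the result. Your handling of Slater's condition via $\calL_{\bfG}h(\bfx)\neq\mathbf{0}$, the two complementarity cases, and the boundary agreement at $\nabla h(\bfx)^\top\bfe(\bfx)+\alpha_h(h(\bfx))=0$ are all sound.
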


While the control law $\bfu^*(\bfx)$ in \eqref{sol: CBF-QP} guarantees safety by design, it might not asymptotically stabilize the system in \eqref{dynamics: Sys_dynamics} to the origin. There exist other local equilibria for the closed-loop system, characterized by the following proposition.

\begin{proposition} \label{prop: local_equi}
Consider the system in \eqref{dynamics: Sys_dynamics} with control law $\bfu^*(\bfx)$ in \eqref{sol: CBF-QP}. The equilibria of the closed-loop system are $\mathbf{0}$ or points $\bfx^* \in \partial \calC$ such that $\nabla h(\bfx^*)^\top \bfe(\bfx^*) < 0$ and 
\begin{equation}\label{criteria: LE-QP}
\bfe(\bfx^*) = \frac{\nabla h(\bfx^*)^\top \bfe(\bfx^*)}{\| \calL_{\bfG} h(\bfx^*) \|_2^2} \bfG(\bfx^*) \calL_{\bfG} h(\bfx^*)^{\top}.
\end{equation}
\end{proposition}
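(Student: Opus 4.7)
The plan is to analyze the closed-loop system $\dot{\bfx} = \bff(\bfx) + \bfG(\bfx)\bfu^*(\bfx)$ by splitting into the two branches of the piecewise controller in \eqref{sol: CBF-QP}. An equilibrium $\bfx^*$ must fall into one of the two branches, and for each I can compute $\dot{\bfx}$ in closed form and set it to zero. The claim is that the ``CLF branch'' yields only the origin, while the ``safety branch'' yields precisely the points on $\partial \calC$ characterized by \eqref{criteria: LE-QP}.

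In the first branch, where $\nabla h(\bfx)^\top \bfe(\bfx) + \alpha_h(h(\bfx)) \geq 0$, the closed-loop system reduces to $\dot{\bfx} = \bfe(\bfx)$. An equilibrium in this branch thus requires $\bfe(\bfx^*) = \mathbf{0}$. I would rule out $\bfx^* \neq \mathbf{0}$ by invoking the CLF property of $\bfr$: since $\bfr(\bfx) \in K_{\textrm{clf}}(\bfx)$ on $\calD \setminus \{\mathbf{0}\}$, we have
\begin{equation*}
\nabla V(\bfx^*)^\top \bfe(\bfx^*) = \calL_{\bff} V(\bfx^*) + \calL_{\bfG} V(\bfx^*)\bfr(\bfx^*) \leq -\alpha_l(V(\bfx^*)) < 0,
\end{equation*}
contradicting $\bfe(\bfx^*) = \mathbf{0}$. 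Hence the only equilibrium in this branch is the origin (which belongs to $\calC$ by Assumption~\ref{assumption:cbf}).

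In the second branch, where $\nabla h(\bfx)^\top \bfe(\bfx) + \alpha_h(h(\bfx)) < 0$, I substitute $\bar\bfu(\bfx)$ from Proposition~\ref{sol: QP} into the dynamics to obtain
\begin{equation*}
\dot{\bfx} = \bfe(\bfx) - \frac{\nabla h(\bfx)^\top \bfe(\bfx) + \alpha_h(h(\bfx))}{\|\calL_{\bfG} h(\bfx)\|_2^2}\, \bfG(\bfx)\calL_{\bfG} h(\bfx)^\top.
\end{equation*}
Setting $\dot{\bfx} = \mathbf{0}$ at a candidate $\bfx^*$ and left-multiplying by $\nabla h(\bfx^*)^\top$, the term $\nabla h(\bfx^*)^\top \bfG(\bfx^*)\calL_{\bfG} h(\bfx^*)^\top$ collapses to $\|\calL_{\bfG} h(\bfx^*)\|_2^2$ (which is nonzero by Assumption~\ref{assumption:cbf}), so the two ``$\nabla h^\top \bfe$'' contributions cancel and I am left with $\alpha_h(h(\bfx^*)) = 0$. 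Since $\alpha_h$ is an extended class $\calK$ function, this forces $h(\bfx^*) = 0$, i.e., $\bfx^* \in \partial \calC$. Plugging $\alpha_h(h(\bfx^*)) = 0$ back into the equilibrium equation yields exactly \eqref{criteria: LE-QP}, and the strict branch inequality then becomes $\nabla h(\bfx^*)^\top \bfe(\bfx^*) < 0$.

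I expect the main subtlety to be the Case~1 argument: one might worry that the stabilizing controller $\bfr$ could introduce spurious equilibria of $\bfe$ away from the origin, but the strict CLF decrease condition rules this out cleanly. A second minor point is the behavior on the switching surface $\nabla h(\bfx)^\top \bfe(\bfx) + \alpha_h(h(\bfx)) = 0$; there the two formulas in \eqref{sol: CBF-QP} agree (since $\bar\bfu = \bfr$ when the correction term vanishes), so no ambiguity arises and any equilibrium on the surface is captured by the Case~1 analysis. The rest of the proof is essentially the algebraic manipulation outlined above.
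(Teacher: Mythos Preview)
Your argument is correct. The paper's own proof is a one-line citation: it invokes \cite[Proposition~5.1]{pol2022roa} and notes that the result follows by replacing $\frac{1}{\epsilon}\calL_{g}V(x)$ there with $-\bfr(\bfx)$. Your proposal instead gives the full self-contained argument---the two-branch case split, the CLF decrease to exclude nonzero equilibria when $\bfu^*=\bfr$, and the left-multiplication by $\nabla h(\bfx^*)^\top$ to force $\alpha_h(h(\bfx^*))=0$ in the active-constraint branch. This is almost certainly the same computation carried out in the cited reference, so the mathematical content is identical; the difference is purely expository. Your version has the advantage of not depending on an external paper, while the paper's version is more compact.
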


\begin{proof}
The result follows by replacing $\frac{1}{\epsilon} \calL_{g}V(x)$ in \cite[Propostion 5.1]{pol2022roa} with $-\bfr(\bfx)$.
\end{proof}

We focus on designing an alternative control law to \eqref{sol: CBF-QP}, which guarantees safety and eliminates the undesired local equilibria noted in Proposition~\ref{prop: local_equi}. 

\begin{problem*}
Consider the system in \eqref{dynamics: Sys_dynamics} with given CLF $V$ on $\calD$ and safe set $\calC = \{\bfx \in \bbR^n | h(\bfx) \geq 0\} \subset \calD$, defined by a CBF $h$. Design a control law that asymptotically stabilizes \eqref{dynamics: Sys_dynamics} to the origin while guaranteeing that $\calC$ is forward invariant.
\end{problem*}

	\section{Differential Complementarity Problem}

Our key idea is to introduce the CLF and CBF conditions as complementarity constraints in a DCP. We first review results in DCP theory, which will serve as the foundation of our CLF-CBF-DCP method.

We say that the DCP in \eqref{eq:generalDCP} has index $0$ if it is equivalent to a system of ODEs. Recall that $\bfC(\bfy,\bfz)$ is strongly monotone in $\bfz$, if there exists $c>0$, such that: 
\begin{equation}
    \left|\left|\bfC(\bfy, \bfz_1) - \bfC(\bfy, \bfz_2)\right|\right| \ge 
c\left|\left|\bfz_1 -\bfz_2\right|\right|, \;\; \forall \bfz_1,\bfz_2.
\end{equation}
If $\bfC(\bfy,\bfz)$ is Lispchitz in $\bfy$ and strongly monotone in $\bfz$, the DCP in \eqref{eq:generalDCP} has index $0$, $\bfz$ can be obtained as a Lipschitz continuous function of $\bfy$, and the DCP problem can be converted to an ODE with Lipschitz right-hand function \cite{STEWART2008812}.


Consider the case when $\bfC(\bfy,\bfz)$ is an affine function of $\bfz$. The corresponding initial-value DCP takes the form:
\begin{equation}\label{criteria: index-zero_DCP}
\begin{aligned}
    & \dot{\bfy} = \hat{\bff}(\bfy,\bfz) \\
    \mathrm{s.t.} \, \, & \mathbf{0} \leq \bfz \perp \bfA(\bfy) \bfz + \bfq(\bfy) \geq \mathbf{0},\\
	& \bfy(0) = \bfy_0,
\end{aligned}
\end{equation}
where the inequalities are applied element-wise. Denote the complementarity constraints of \eqref{criteria: index-zero_DCP} by $\mathrm{LCP}(\bfq(\bfy),\bfA(\bfy) )$, where $\mathrm{LCP}$ stands for linear complementarity problem. We are interested in assumptions on $\bfA(\bfy)$ under which $\mathrm{LCP}(\bfq(\bfy),\bfA(\bfy) )$ has a unique solution $\bfz(\bfy)$ for any choice of $\bfq(\bfy)$. The class of matrices for which this holds are $P$ matrices \cite[Thm.~3.3.7]{PangLCP}. A matrix is said to be a $P$ matrix when all of its principle minors are positive.

The next result considers a particular form for the matrix $\bfA(\bfy)$, which we will use to specify safety and stability constraints, and gives sufficient conditions for the DCP \eqref{criteria: index-zero_DCP} to be equivalent to a Lipschitz ODE. 

\begin{proposition} \label{prop: DCP_uniqueness}
    Consider the DCP in \eqref{criteria: index-zero_DCP}. Assume that the functions $\bfA(\bfy)$, $\bfq(\bfy)$ and $\bff(\bfy,\bfz)$ are locally Lipschitz continuous at $\bfy_0$ and that $\bfA(\bfy)$ has the form:
	\begin{equation}\label{eq:Amatrix}
	\bfA(\bfy) = \left[ \begin{matrix}
			a(\bfy) & 0\\
			c(\bfy)\ & d(\bfy)
		\end{matrix} \right] \in \R^{2\times 2},
	\end{equation}
	with elements satisfying either:
	\begin{enumerate}
	    \item[i)] $a(\bfy_0)>0$ and $d(\bfy_0)>0$, or
	    \item[ii)] $a(\bfy_0) = 0$, $d(\bfy_0) > 0$, and $q_1(\bfy_0) > 0$. 
	\end{enumerate}
	If either i) or ii) holds, then in a neighborhood of $\bfy_0$ the solution $\bfz(\bfy)$ of $\mathrm{LCP}(\bfq(\bfy),\bfA(\bfy))$ is unique, locally Lipschitz continuous, and has a closed-form expression:
    \begin{equation} 
    	\label{sol: LCP}
    	\begin{aligned}
    		&z_1(\bfy) = \begin{cases}
    			\frac{(-q_1(\bfy))_+}{a(\bfy)}, & a(\bfy) > 0, \\
    			0, & a(\bfy) = 0,
    			\end{cases}\\
    		&z_2(\bfy) = \frac{(-c(\bfy)z_1(\bfy)-q_2(\bfy))_+}{d(\bfy)},
    	\end{aligned}
    \end{equation}
    where $(\cdot)_+$ denotes $\max(\cdot,0)$. Hence, DCP \eqref{criteria: index-zero_DCP} is equivalent to an ODE with a locally Lipschitz right-hand side. 
\end{proposition}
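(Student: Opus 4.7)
The plan is to exploit the lower-triangular structure of $\bfA(\bfy)$ to decouple $\mathrm{LCP}(\bfq(\bfy),\bfA(\bfy))$ into two scalar complementarity problems that can be solved sequentially. First I would solve the row-one condition $0 \le z_1 \perp a(\bfy) z_1 + q_1(\bfy) \ge 0$, obtain a closed-form $z_1(\bfy)$, substitute into the row-two condition $0 \le z_2 \perp d(\bfy) z_2 + c(\bfy) z_1(\bfy) + q_2(\bfy) \ge 0$, and solve that scalar LCP as well. The classical formula for a scalar LCP $0 \le z \perp \alpha z + \beta \ge 0$ with $\alpha > 0$ is $z = (-\beta)_+/\alpha$, and this immediately yields the expressions in \eqref{sol: LCP}, together with uniqueness in each row.

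In case (i), local Lipschitz continuity of $a$ and $d$ together with $a(\bfy_0)>0$, $d(\bfy_0)>0$ gives a neighborhood of $\bfy_0$ on which both remain strictly positive and bounded away from zero. The scalar formula then applies in both rows. In case (ii), I would first observe that $q_1(\bfy_0)>0$ and continuity of $q_1$ yield a neighborhood on which $q_1(\bfy) > 0$; hence $(-q_1(\bfy))_+ \equiv 0$ on that neighborhood. This makes the piecewise definition of $z_1$ collapse to the single value $z_1(\bfy) \equiv 0$, irrespective of whether $a(\bfy)$ is zero or positive; complementarity is then verified directly (the residual $a(\bfy)\cdot 0 + q_1(\bfy) = q_1(\bfy) > 0$ is nonnegative, and $z_1 = 0$ gives orthogonality). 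The row-two argument proceeds as in case (i) using $d(\bfy_0) > 0$.

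For Lipschitz continuity, I would invoke three standard facts: the positive-part map $(\cdot)_+ = \max(\cdot,0)$ is globally $1$-Lipschitz; the reciprocal of a locally Lipschitz scalar function that is bounded away from zero is locally Lipschitz; and finite sums, products, and compositions of locally Lipschitz functions are locally Lipschitz. These imply that $z_1(\bfy)$ is locally Lipschitz in case (i) (and trivially so in case (ii), being identically zero) and then that $z_2(\bfy)$ is locally Lipschitz since $d(\bfy)$ stays bounded away from zero. Uniqueness of $\bfz(\bfy)$ on the neighborhood follows row-by-row from scalar LCP uniqueness and the triangular coupling.

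The main obstacle I anticipate is case (ii), because the closed-form for $z_1$ in \eqref{sol: LCP} is piecewise defined and, read naively, looks discontinuous at the locus $\{a(\bfy) = 0\}$. The key insight to resolve this is that the hypothesis $q_1(\bfy_0) > 0$ is precisely what makes the apparent discontinuity illusory: near $\bfy_0$ the numerator $(-q_1(\bfy))_+$ vanishes identically, so the two branches agree and $z_1$ is trivially Lipschitz. Once the closed-form $\bfz(\bfy)$ is established as locally Lipschitz, substituting it into $\dot{\bfy} = \bff(\bfy,\bfz)$ and using local Lipschitz continuity of $\bff$ in both arguments yields the equivalent ODE $\dot{\bfy} = \bff(\bfy,\bfz(\bfy))$ with locally Lipschitz right-hand side, completing the proof.
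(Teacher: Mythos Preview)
Your proposal is correct and follows essentially the same route as the paper: decouple the lower-triangular LCP into two scalar problems, apply the formula $z = (-\beta)_+/\alpha$ for $\alpha>0$, and then chain the standard Lipschitz-preservation facts (positive part, reciprocal away from zero, sums/products/compositions). The only cosmetic difference is that the paper cites the $P$-matrix theory for uniqueness in case~(i), whereas you obtain uniqueness directly from the scalar analysis; your treatment of case~(ii)---noting that $(-q_1(\bfy))_+\equiv 0$ collapses the two branches---is actually a bit more explicit than the paper's.
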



\begin{proof}
    We prove the result under assumption i) first. From \eqref{eq:Amatrix} and the continuity assumptions on the problem data, it follows that $\bfA(\bfy)$ is a $P$ matrix in a neighborhood $\mathcal{V}$ of $\bfy_0$. This implies that the solution $\bfz(\bfy)$ of $\mathrm{LCP}(\bfq(\bfy),\bfA(\bfy))$ is unique for all $\bfy \in \mathcal{V}$. To derive the closed-form solution, consider the scalar $\mathrm{LCP}(\widetilde{b}, \widetilde{a})$: $0 \le t \perp \widetilde{a} t+\widetilde{b} \ge 0$ with $\widetilde{a}>0$. The scalar LCP has a unique solution: $t= \frac{(-\widetilde{b})_+}{\widetilde{a}}$. Due to the triangular form of the matrix $\bfA(\bfy)$, we can determine $z_1(\bfy)$ first by solving $\mathrm{LCP}(q_1(\bfy), a(\bfy))$. This gives $z_1(\bfy) = \frac{(-q_1(\bfy))_+}{a(\bfy)}$ for $\bfy \in \calV$. Substituting $z_1(\bfy)$ in the second complementarity constraint leads to another scalar $\mathrm{LCP}(c(\bfy) z_1(\bfy) +q_2(\bfy), d(\bfy))$ with $d(\bfy) > 0$ for $\bfy \in \calC$. Thus, $z_2(\bfy) = \frac{(-c(\bfy)z_1(\bfy)-q_2(\bfy))_+}{d(\bfy)}$. To prove that $\bfz(\bfy)$ is locally Lipshitz, we note that since $\bfA(\bfy)$ is a $P$ matrix in $\mathcal{V}$, we must have  $a(\bfy), d(\bfy) > 0$ for all $\bfy \in \calV$ which implies that $1/a(\bfy)$, $1/d(\bfy)$ are locally Lipschitz. The function $(r)_+$ with $r \in \bbR$ is locally Lipschitz and the composition $(f_2(x))_+$ satisfies the same property, whenever $f_2(x)$ is locally Lipschitz. Since the sum or product of two locally Lipschitz continuous functions are each locally Lipschitz, we can conclude that $\bfz(\bfy)$ is locally Lipschitz. This completes the proof under assumption i).

    Now, we consider assumption ii). Due to the continuity assumptions, we have $q_1(\bfy) > 0$, for all $\bfy$ in a neighborhood $\tilde{\calV}$ of $\bfy_0$. To satisfy the complementarity constraint, we need $z_1(\bfy)q_1(\bfy) = 0$. Since $q_1(\bfy) >0$ for all $\bfy\in \tilde{\calV}$ it follows that $z_1(\bfy) = 0$, $\bfy\in \tilde{\calV}$ and therefore $z_1(\bfy)$ is locally Lipschitz. The other component of $\bfz(\bfy)$, $z_2(\bfy)$, is the solution to the scalar $\mathrm{LCP}(q_2(\bfy), d(\bfy))$ with $d(\bfy) > 0$ and we can use the result under assumption i) to conclude that $z_2(\bfy)$ is Lipschitz in a neighborhood of $\bfy_0$.\qedhere
\end{proof}

	\section{CLF-CBF-DCP Control Design}
This section presents our DCP formulation for control synthesis with CLF stability and CBF safety constraints.
\subsection{DCP Formulation and Uniqueness of Solutions} 
Consider the system in \eqref{dynamics: Sys_dynamics} and parameterize its input as:
\begin{equation}
	\label{bfu: struct}
	\bfu = \bfw_l\bar{u}_l + \bfw_h\bar{u}_h.
\end{equation}
We will use the vectors $\bfw_l, \bfw_h \in \bbR^m$ to determine the directions of stability and safety satisfaction and the scalars $\bar{u}_l, \bar{u}_h \in \bbR$ to determine the control input magnitude. Consider the following DCP with CLF stability and CBF safety complementarity constraints:
\begin{subequations}
	\label{DCP: BQ-form}
	\begin{align}
		&\dot{\bfx} = \bff(\bfx) + \bfG(\bfx)[\bfw_l\bar{u}_l + \bfw_h\bar{u}_h] \label{DCP: ODE} \\
		\mathrm{s.t.} \, \, & 0 \leq \bar{u}_l \perp -F_l(\bfx) - \mathcal{L}_{\bfG}V(\bfx)  \bfw_l\bar{u}_l \geq 0, \label{DCP: CLC} \\
		& 0 \leq \bar{u}_h \perp F_h(\bfx) + \mathcal{L}_{\bfG}h(\bfx)  [\bfw_l\bar{u}_l + \bfw_h\bar{u}_h] \geq 0, \label{DCP: CBC}
	\end{align}
\end{subequations}
where $F_l(\bfx) := \mathcal{L}_{\bff}V(\bfx) + \alpha_l(V)$ and $F_h(\bfx) := \mathcal{L}_{\bff}h(\bfx) + \alpha_h(h)$. 
The constraint \eqref{DCP: CLC} requires that the magnitude term $\bar{u}_l$ responsible for ensuring stability is zero unless the CLF condition in Def.~\ref{CLF} is endangered. 
The safety constraint \eqref{DCP: CBC} needs to prevent safety violation caused by the stabilizing input $\bfw_l\bar{u}_l$. To achieve this, we add an overriding term $\bfw_h\bar{u}_h$ in \eqref{DCP: CBC}, which is zero unless the CBF condition in Def.~\ref{CBF} is endangered with control input $\bfw_l\bar{u}_l$. The system will switch from stabilizing mode to safe mode if $\bfw_h\bar{u}_h$ is not zero.
We show that under additional regularity assumptions, the control law resulting from the DCP in \eqref{DCP: BQ-form} is unique and locally Lipschitz continuous and, hence, ensures the existence and uniqueness of closed-loop system trajectories.

\begin{assumption} \label{assumption: lipschitz}
$\nabla V(\bfx)$ and $\nabla h(\bfx)$ are locally Lipschitz continuous for all $\bfx \in \calD \setminus \{ \mathbf{0} \}$. The functions $\alpha_l$ in Def.~\ref{CLF} and $\alpha_h$ in Def.~\ref{CBF} are locally Lipschitz continuous.
\end{assumption}

\begin{theorem} \label{trem: sol_DCP-BQ}
Assume $\calL_{\bfG}V(\bfx) \neq \mathbf{0}$, $\forall \bfx \in \calE \coloneqq \{\bfx \in \calD \setminus \{\mathbf{0}\} \mid F_l(\bfx)=0 \}$. Assume $\bfw_l$ satisfies $-\calL_{\bfG}V(\bfx)  \bfw_l > 0$ for all $\bfx \in \calD \setminus\{\mathbf{0}\}$ such that $\calL_{\bfG}V(\bfx) \neq \mathbf{0}$. Assume $\bfw_h$ satisfies $\mathcal{L}_{\bfG}h(\bfx)  \bfw_h > 0$ for all $\bfx \in \calD \setminus \{ \mathbf{0} \}$. Then, for all $\bfx \in \calD \setminus \{ \mathbf{0} \}$, the solution of the LCP in \eqref{DCP: CLC}-\eqref{DCP: CBC} is unique and has a closed-form:
\begin{equation} 
	\label{sol: DCP-BQ}
	\begin{aligned}
		&\bar{u}_l = \begin{cases}
			-\frac{F_l(\bfx)}{\mathcal{L}_{\bfG}V(\bfx)  \bfw_l}, & F_l(\bfx) \geq 0, \\
			0, & F_l(\bfx) < 0,
			\end{cases}\\
		&\bar{u}_h = \left(-\frac{F_h(\bfx) + \mathcal{L}_{\bfG}h(\bfx)  \bfw_l \bar{u}_l }{\mathcal{L}_{\bfG}h(\bfx)  \bfw_h}\right)_+.
	\end{aligned}
\end{equation}
If Assumption~\ref{assumption: lipschitz} holds, then $\bar{u}_l$ and $\bar{u}_h$ are locally Lipschitz continuous for all $\bfx \in \calD \setminus \{ \mathbf{0}\}$.
\end{theorem}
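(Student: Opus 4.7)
The plan is to cast the LCP in \eqref{DCP: CLC}--\eqref{DCP: CBC} into the form \eqref{criteria: index-zero_DCP} and then invoke Proposition~\ref{prop: DCP_uniqueness} pointwise. Setting $\bfz = (\bar{u}_l, \bar{u}_h)^\top$, the constraints yield
\begin{equation*}
\bfA(\bfx) = \begin{bmatrix} -\calL_{\bfG}V(\bfx)\bfw_l & 0 \\ \calL_{\bfG}h(\bfx)\bfw_l & \calL_{\bfG}h(\bfx)\bfw_h \end{bmatrix}, \quad \bfq(\bfx) = \begin{bmatrix} -F_l(\bfx) \\ F_h(\bfx) \end{bmatrix},
\end{equation*}
which is already triangular as in \eqref{eq:Amatrix}. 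By hypothesis, $d(\bfx) = \calL_{\bfG}h(\bfx)\bfw_h > 0$ on $\calD\setminus\{\mathbf{0}\}$, and $a(\bfx) = -\calL_{\bfG}V(\bfx)\bfw_l \geq 0$ with equality exactly when $\calL_{\bfG}V(\bfx) = \mathbf{0}$.

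Fixing an arbitrary $\bfx_0 \in \calD\setminus\{\mathbf{0}\}$, I would split into two cases. If $\calL_{\bfG}V(\bfx_0) \neq \mathbf{0}$, then $a(\bfx_0) > 0$ and $d(\bfx_0) > 0$, so case i) of Proposition~\ref{prop: DCP_uniqueness} applies directly. If $\calL_{\bfG}V(\bfx_0) = \mathbf{0}$, the key observation is that $q_1(\bfx_0) = -F_l(\bfx_0) > 0$: indeed, setting $\bfu = \mathbf{0}$ in the CLF condition \eqref{criteria: CLF} gives $F_l(\bfx_0) = \calL_{\bff}V(\bfx_0) + \alpha_l(V(\bfx_0)) \leq 0$, and the hypothesis that $\calL_{\bfG}V \neq \mathbf{0}$ on $\calE = \{F_l = 0\}$ rules out the equality $F_l(\bfx_0) = 0$, so $F_l(\bfx_0) < 0$. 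Case ii) of Proposition~\ref{prop: DCP_uniqueness} then applies. In either case, the proposition provides uniqueness and the closed form \eqref{sol: LCP}, which after substituting the identifications of $a, c, d, q_1, q_2$ reduces to \eqref{sol: DCP-BQ}: when $F_l(\bfx) \geq 0$ we necessarily have $\calL_{\bfG}V(\bfx) \neq \mathbf{0}$ (by the argument above) so $\bar{u}_l = -F_l/(\calL_{\bfG}V\,\bfw_l)$, when $F_l(\bfx) < 0$ the positive part vanishes and $\bar{u}_l = 0$, and the expression for $\bar{u}_h$ follows from the identity $(\cdot)_+/d = (\cdot/d)_+$ valid for $d > 0$.

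For the Lipschitz claim, I would note that under Assumption~\ref{assumption: lipschitz} the gradients $\nabla V$, $\nabla h$ and the class-$\mathcal{K}$ functions $\alpha_l$, $\alpha_h$ are locally Lipschitz; combined with the local Lipschitzness of $\bff$ and $\bfG$, this makes $\calL_{\bff}V$, $\calL_{\bfG}V$, $\calL_{\bff}h$, $\calL_{\bfG}h$, $F_l$, $F_h$ all locally Lipschitz on $\calD\setminus\{\mathbf{0}\}$, so both $\bfA$ and $\bfq$ satisfy the regularity hypothesis of Proposition~\ref{prop: DCP_uniqueness}. The conclusion of that proposition then transfers local Lipschitz continuity to $\bar{u}_l$ and $\bar{u}_h$. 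The main obstacle I anticipate is the case $\calL_{\bfG}V(\bfx) = \mathbf{0}$: it must be shown that the combination of the CLF property with the $\calE$ hypothesis forces $F_l < 0$ there, because otherwise the LCP would be infeasible and the closed-form expression would break down.
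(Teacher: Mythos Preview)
Your proposal is correct and follows essentially the same route as the paper's proof: cast \eqref{DCP: CLC}--\eqref{DCP: CBC} as $\mathrm{LCP}(\bfq(\bfx),\bfA(\bfx))$ with the triangular $\bfA$, split on whether $\calL_{\bfG}V(\bfx)$ vanishes, and invoke Proposition~\ref{prop: DCP_uniqueness} case i) or ii) accordingly. Your justification that $\calL_{\bfG}V(\bfx_0)=\mathbf{0}$ forces $F_l(\bfx_0)<0$ (via the CLF inequality at $\bfu=\mathbf{0}$, then excluding equality through the $\calE$ hypothesis) is exactly what the paper compresses into ``by our assumption on $\calL_{\bfG}V(\bfx)$ and Def.~\ref{CLF}''; you are simply more explicit, and likewise more explicit in checking the Lipschitz regularity of $\bfA$ and $\bfq$ before invoking the proposition.
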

\begin{proof}
    Rewrite \eqref{DCP: CLC}-\eqref{DCP: CBC} as follows $0 \leq \bfz \perp \bfA(\bfx)\bfz + \bfq(\bfx) \geq 0$ with $\bfq(\bfx) = [-F_l(\bfx), F_h(\bfx)]^{\top}$ and $\bfA(\bfx) = \left[ \begin{matrix}
		-\mathcal{L}_{\bfG}V(\bfx)  \bfw_l & 0\\
		\mathcal{L}_{\bfG}h(\bfx)  \bfw_l & \mathcal{L}_{\bfG}h(\bfx)  \bfw_h
	\end{matrix} \right]$
    to reflect \eqref{criteria: index-zero_DCP}. When $\calL_{\bfG}V(\bfx) \neq \mathbf{0}$, we have $a(\bfx) = -\mathcal{L}_{\bfG}V(\bfx)  \bfw_l > 0$ and $d(\bfx) = \mathcal{L}_{\bfG}h(\bfx)  \bfw_h > 0$, which satisfies assumption i) in Proposition~\ref{prop: DCP_uniqueness}.
    When $\calL_{\bfG}V(\bfx) = \mathbf{0}$, we must have $q_1(\bfx) = -F_l(\bfx) > 0$ by our assumption on $\calL_{\bfG}V(\bfx)$ and Def.~\ref{CLF}. Thus, the LCP in \eqref{DCP: CLC}-\eqref{DCP: CBC} satisfies assumption ii) in Proposition~\ref{prop: DCP_uniqueness}. In both cases, by Proposition~\ref{prop: DCP_uniqueness}, the solution $\bar{u}_l$, $\bar{u}_h$ of $\mathrm{LCP}(\bfq(\bfx), \bfA(\bfx))$ is unique, locally Lipschitz continuous, and available in closed-form.
\end{proof}

A \emph{na{\"i}ve choice} of $\bfw_l$ and $\bfw_h$ that satisfies the requirements of Thm.~\ref{trem: sol_DCP-BQ} is $\bfw_l = -\mathcal{L}_{\bfG}V(\bfx)^{\top}$ and $\bfw_h = \mathcal{L}_{\bfG}h(\bfx)^{\top}$. It is also possible to avoid the extra assumption $\calL_{\bfG}V(\bfx) \neq \mathbf{0}$, $\forall \bfx \in \calE$ in Thm.~\ref{trem: sol_DCP-BQ}. If both $\calL_{\bfG}V(\bfx) = \mathbf{0}$ and $F_l(\bfx) = 0$, then $\bar{u}_l$ is not unique but, since stability is not endangered, we can choose $\bar{u}_l = 0$ to obtain the CLF-CBF-DCP controller:
\begin{equation} 
    \label{alt: DCP-BQ}
	\begin{aligned}
		&\bar{u}_l = \begin{cases}
			-\frac{F_l(\bfx)}{\mathcal{L}_{\bfG}V(\bfx)  \bfw_l}, & F_l(\bfx) > 0, \\
			0, & F_l(\bfx) \leq 0,
			\end{cases}\\
		&\bar{u}_h = \left(-\frac{F_h(\bfx) + \mathcal{L}_{\bfG}h(\bfx)  \bfw_l \bar{u}_l }{\mathcal{L}_{\bfG}h(\bfx)  \bfw_h}\right)_+.
	\end{aligned}
\end{equation}
We propose an alternative choice of $\bfw_h$ for our control law and study the equilibria of the closed-loop system next.

\subsection{Control Modification to Remove Undesired Equilibria}

As noted in Proposition \ref{prop: local_equi}, QP-based methods introduce undesired equilibria satisfying condition \eqref{criteria: LE-QP}. We analyze the equilibria under the CLF-CBF-DCP controller in \eqref{alt: DCP-BQ} and attempt to remove undesired ones using the degrees of freedom allowed by $\bfw_l$ and $\bfw_h$ in \eqref{bfu: struct}. Since $\bfw_h$ multiplies $\calL_{\bfG}h(\bfx)$ in the CBF constraint, introducing a component from the null space of $\calL_{\bfG}h(\bfx)$ to $\bfw_h$ will not change the safety condition but may be used to modify the control input. This motivates a modification of the na{\"i}ve choice of $\bfw_h$:
%
\begin{equation}
	\label{sel: bfw}
	\bfw_l = -\mathcal{L}_{\bfG}V(\bfx)^{\top}, \quad \bfw_h = \mathcal{L}_{\bfG}h(\bfx)^{\top} + k \bfw_p(\bfx), 
\end{equation}
where $k \in \bbR$ and $\bfw_p(\bfx)$ is locally Lipschitz and satisfies:
%
\begin{align} \label{wp-cond} 
\mathcal{L}_{\bfG}h(\bfx)  \bfw_p(\bfx) = 0, \; \bfG(\bfx)\bfw_p(\bfx) \neq \mathbf{0}, \; \forall \bfx \!\in \!\calD \!\setminus\!\{ \mathbf{0} \}.
\end{align}
The former requirement in \eqref{wp-cond} ensures that $\bfw_p(\bfx)$ does not affect the safety condition, while the latter that $\bfw_p(\bfx)$ affects the closed-loop dynamics. A sufficient condition for the latter requirement that $\bfw_p(\bfx)$ is not in the null space of $\bfG(\bfx)$ is $\bfN(\bfx)^\top \bfw_p(\bfx) = 0$, where $\bfN(\bfx) \in \bbR^{m \times (m-\rank(\bfG(x)))}$ is a null-space basis. Hence, \eqref{wp-cond} is satisfied if the following system has a non-zero solution:
\begin{equation} \label{eq:assumption3}
\begin{bmatrix}
    \nabla h(\bfx)^\top \bfG(\bfx) \\ \bfN(\bfx)^\top
\end{bmatrix} \bfw_p(\bfx) = \mathbf{0}.
\end{equation}
Since the matrix in \eqref{eq:assumption3} has $m+1-\rank(\bfG(x))$ rows, a sufficient condition for the existence of non-zero $\bfw_p(\bfx)$ satisfying \eqref{wp-cond} is $\rank(\bfG(x)) \geq 2$.

\begin{assumption} \label{assumption-rkG}
$\bfG(\bfx) \in \bbR^{n \times m}$
satisfies $\rank(\bfG(\bfx)) \geq 2$.
\end{assumption}

To highlight the role of $\bfw_p(\bfx)$ in the closed-loop system and simplify the notation, define $\bfF_u(\bfx) := \bff(\bfx) + \bfG(\bfx)[-\mathcal{L}_{\bfG}V^{\top}(\bfx)\bar{u}_l +  \mathcal{L}_{\bfG}h^{\top}(\bfx) \bar{u}_h]$ with $\bar{u}_l$, $\bar{u}_h$ given in \eqref{alt: DCP-BQ}. Then, the closed-loop system becomes:
\begin{equation} \label{sys: cl}
    \dot{\bfx} = \bfF_u(\bfx) + \bar{u}_h k \bfG(\bfx) \bfw_p(\bfx).
\end{equation}
%

%
%
\begin{proposition}\label{le: no_int}
Consider system \eqref{dynamics: Sys_dynamics} with the CLF-CBF-DCP control law in \eqref{bfu: struct}, \eqref{alt: DCP-BQ}, and \eqref{sel: bfw}. There are no equilibria of the closed-loop system in $\textrm{Int}(\calC)$ except $\mathbf{0}$.
\end{proposition}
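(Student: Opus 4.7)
The plan is a proof by contradiction: assume some $\bfx^* \in \textrm{Int}(\calC) \setminus \{\mathbf{0}\}$ is a closed-loop equilibrium and derive $\bfx^* = \mathbf{0}$. The guiding intuition is that strict feasibility of $h$ at an interior point creates strictly positive slack in the CBF complementarity, so an interior equilibrium must be "safety-inactive," after which the CLF complementarity takes over.

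First I would project the equilibrium relation $\dot{\bfx}|_{\bfx^*} = \mathbf{0}$ onto $\nabla h(\bfx^*)$. Using $\mathcal{L}_{\bfG}h(\bfx^*)\bfw_p(\bfx^*) = 0$ from \eqref{wp-cond} together with the definition of $F_h$, one immediately gets
\begin{equation*}
    F_h(\bfx^*) + \mathcal{L}_{\bfG}h(\bfx^*)[\bfw_l \bar{u}_l + \bfw_h \bar{u}_h] = \alpha_h(h(\bfx^*)),
\end{equation*}
which is strictly positive because $h(\bfx^*) > 0$ and $\alpha_h$ is extended class $\calK$. The CBF complementarity constraint \eqref{DCP: CBC} then forces $\bar{u}_h = 0$, so the $k\bfw_p(\bfx^*)$ modification plays no role at an interior equilibrium.

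With $\bar{u}_h = 0$ and $\bfw_l = -\mathcal{L}_{\bfG}V(\bfx)^\top$, the equilibrium condition reduces to $\bff(\bfx^*) = \bfG(\bfx^*)\mathcal{L}_{\bfG}V(\bfx^*)^\top \bar{u}_l$. Projecting onto $\nabla V(\bfx^*)$ yields $\mathcal{L}_{\bff}V(\bfx^*) = \|\mathcal{L}_{\bfG}V(\bfx^*)\|_2^2 \,\bar{u}_l$. I would then split according to the piecewise definition of $\bar{u}_l$ in \eqref{alt: DCP-BQ}. If $F_l(\bfx^*) > 0$, substituting the closed-form $\bar{u}_l = F_l(\bfx^*)/\|\mathcal{L}_{\bfG}V(\bfx^*)\|_2^2$ collapses the identity to $\alpha_l(V(\bfx^*)) = 0$. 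If $F_l(\bfx^*) \leq 0$, then $\bar{u}_l = 0$ gives $\bff(\bfx^*) = \mathbf{0}$, so $\mathcal{L}_{\bff}V(\bfx^*) = 0$ and $F_l(\bfx^*) = \alpha_l(V(\bfx^*)) \leq 0$. In either branch $\alpha_l(V(\bfx^*)) \leq 0$, and since $\alpha_l$ is class $\calK$ and $V$ is positive definite on $\calD$, this forces $V(\bfx^*) = 0$, i.e., $\bfx^* = \mathbf{0}$, contradicting the assumption.

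The main obstacle is conceptual rather than technical: one has to notice that the projections onto $\nabla h$ and $\nabla V$ decouple the CBF and CLF sides of the DCP at an equilibrium, so that the complementarity constraints collapse to scalar identities. Once that is seen, the algebra is routine, and one should briefly check that the case $\mathcal{L}_{\bfG}V(\bfx^*) = \mathbf{0}$ causes no issue because Definition~\ref{CLF} then forces $F_l(\bfx^*) \leq 0$, placing us squarely in the second branch above.
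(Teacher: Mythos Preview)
Your argument is correct. The organization differs from the paper's in a way worth noting: the paper splits into the cases $\bar{u}_h = 0$ and $\bar{u}_h > 0$, and in the latter case argues that the active CBF equality forces $\nabla h(\bfx^*)^\top \bfF_u(\bfx^*) = -\alpha_h(h(\bfx^*)) \neq 0$, whence $\bfF_u(\bfx^*)$ and $\bfG(\bfx^*)\bfw_p(\bfx^*)$ are linearly independent and cannot satisfy the collinearity required by the equilibrium relation. You instead project the equilibrium relation onto $\nabla h(\bfx^*)$ \emph{first}, which directly evaluates the CBF slot in \eqref{DCP: CBC} as $\alpha_h(h(\bfx^*)) > 0$ and immediately forces $\bar{u}_h = 0$ by complementarity, eliminating the case split and the linear-independence step altogether. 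After that, your treatment of the CLF side is essentially an explicit unwinding of what the paper states in one line (``the CLF condition implies $\nabla V(\bfx^*)^\top \bfF_u(\bfx^*) \leq -\alpha_l(V(\bfx^*))$''), with the added benefit that you make the $\calL_{\bfG}V(\bfx^*) = \mathbf{0}$ corner case explicit. Both routes are short; yours is a bit more streamlined and relies only on the complementarity structure, while the paper's version foreshadows the collinearity condition \eqref{equilibria-cond} that is reused later in Proposition~\ref{local_equilibria}.
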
 

\begin{proof}
Any non-zero equilibrium $\bfx^*$ of \eqref{sys: cl} satisfies:
\begin{align} \label{equilibria-cond}
-\bfF_u(\bfx^*) = \bar{u}_h(\bfx^*) k \bfG(\bfx^*) \bfw_p(\bfx^*).
\end{align}
Suppose that $\bfx^* \in \textrm{Int}(\calC) \setminus \{\mathbf{0}\}$ and consider two cases.


\textit{(i)}: If $\bar{u}_h = 0$, from \eqref{equilibria-cond}:
\begin{equation}
   -\bfF_u(\bfx^*) = -\bff(\bfx^*) + \bfG(\bfx^*) \calL_{\bfG} V(\bfx^*)^\top \bar{u}_l = \mathbf{0}.
\end{equation}
However, the CLF condition in \eqref{DCP: CLC} implies that $\nabla V(\bfx^*)^\top \bfF_u(\bfx^*) \leq - \alpha_l(V(\bfx^*))$. Since $\alpha_l(V(\bfx)) = 0$ only when $\bfx = \mathbf{0}$, the CLF condition yields $\bfF_u(\bfx) \neq \mathbf{0}$ for all $\bfx \in \calD \setminus \{\mathbf{0}\}$ and hence $\bfx^*$ cannot be an equilibrium.

\textit{(ii)}: If $\bar{u}_h > 0$, the CBF constraint in \eqref{DCP: CBC} implies:
\begin{equation}\label{eq:uh-positive}
    \calL_{\bfF_u}h(\bfx^*) + \bar{u}_h k \calL_{\bfG}h(\bfx^*)  \bfw_p(\bfx^*) + \alpha_h(h(\bfx^*)) = 0.
\end{equation}
From \eqref{wp-cond}, \eqref{eq:uh-positive} becomes $\nabla h(\bfx^*)^\top \bfF_u(\bfx^*) = - \alpha_h(h(\bfx^*))$. For all $\bfx \in\textrm{Int}(\calC)$, it holds that $h(\bfx) > 0$ and $\alpha_h(h(\bfx)) > 0$. As a result, $\nabla h(\bfx^*)^\top \bfF_u(\bfx^*) \neq 0$ but, from \eqref{wp-cond}, $\nabla h(\bfx^*)^\top \bfG(\bfx^*)\bfw_p(\bfx^*) = 0$, indicating that $\bfF_u(\bfx^*)$ and $\bfG(\bfx^*)\bfw_p(\bfx^*)$ are linearly independent. Hence, they cannot be collinear as required by \eqref{equilibria-cond}, and $\bfx^*$ cannot be an equilibrium. By contradiction, there are no equilibria of the closed-loop system in $\textrm{Int}(\calC)\setminus\{\mathbf{0}\}$.
\end{proof}

\begin{remark} Our CLF-CBF-DCP formulation guarantees safety because the control law $\bfu(\bfx)$ given by \eqref{bfu: struct}, \eqref{alt: DCP-BQ}, and \eqref{sel: bfw} satisfies $\bfu(\bfx) \in K_{cbf}(\bfx)$, $\forall \bfx \in \calD$, making $\calC$ invariant \cite{xu2015robustness}.\end{remark}

As shown in Proposition~\ref{le: no_int}, equilibria besides $\mathbf{0}$ occur only on $\partial \calC$ with $\bar{u}_h > 0$, which implies $\bfF_u(\bfx) \neq \mathbf{0}$ if \eqref{equilibria-cond} holds. To eliminate an undesired equilibrium $\bfx^* \in \partial\calC$, we consider how to choose $k \in \bbR$ such that \eqref{equilibria-cond} does not hold. Let $\bfa \parallel \bfb$ indicate that vectors $\bfa$ and $\bfb$ are linearly dependent and in the same direction. Define the following sets:
\begin{align*}
\Omega & \coloneqq \{ \bfx \in \partial \calC \mid -\bfF_u(\bfx) \parallel \bfG(\bfx)\bfw_p(\bfx), \;\bfF_u(\bfx) \neq \mathbf{0} \}, \\
\calX &\coloneqq \{ \bfx \in \Omega \mid \bar{u}_h > 0 \},
\end{align*}
where $\calX$ is the set inside which undesired equilibria might occur.
If $\calX \equiv \emptyset$, then it is clear that \eqref{equilibria-cond} does not hold and no undesired equilibria occur. Consider $\cal X \neq \emptyset$ hereafter. Since $\calX \subseteq \partial \calC \subset \calD$, the requirement on $\bfw_p(\bfx)$ in \eqref{wp-cond} implies that $\inf_{\bfx \in \calX}\| \bfG(\bfx) \bfw_p(\bfx) \|>0$.
If $\inf_{\bfx \in \calX} \bar{u}_h > 0$, then choosing
\begin{equation}
	\label{criteria: k}
	k > \frac{\sup_{\bfx \in \calX} \| \bfF_u(\bfx) \|}{ \inf_{\bfx \in \calX} \bar{u}_h \| \bfG(\bfx) \bfw_p \|}
\end{equation}
ensures that all equilibria satisfying \eqref{equilibria-cond} would be removed. However, such $k$ may not exist if $\inf_{\bfx \in \calX} \bar{u}_h = 0$, which may be attained for some $\bfx \in \calS \coloneqq \{ \bfx \in \Omega \mid \bar{u}_h = 0,\; \calB_{\epsilon}(\bfx) \cap \calX \neq \emptyset, \;\forall \epsilon > 0 \}$ where $\calB_{\epsilon}(\bfx)$ is a Euclidean ball at $\bfx$ with radius $\epsilon$. Instead, suppose that with user-specified $\nu>0$:
\begin{equation}
    \label{criteria: relax_k}
    k > \frac{\sup_{\bfx \in \calX} \| \bfF_u(\bfx) \|}{ \nu \inf_{\bfx \in \calX} \| \bfG(\bfx) \bfw_p \|}.
\end{equation}
%


%
\begin{proposition} \label{local_equilibria}
Consider system \eqref{dynamics: Sys_dynamics} with the CLF-CBF-DCP control law in \eqref{bfu: struct}, \eqref{alt: DCP-BQ}, \eqref{sel: bfw}, and parameter $k$ satisfying \eqref{criteria: relax_k}. Then, closed-loop system eqilibria $\bfx^* \in \calD \setminus\{\mathbf{0}\}$ exist only in $\calQ \coloneqq \{ \bfx \in \Omega \mid 0 < \bar{u}_h < \nu \} \subset \partial \calC$ and satisfy \eqref{equilibria-cond}.
\end{proposition}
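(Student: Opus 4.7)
The plan is to combine Proposition \ref{le: no_int} with a norm-based contradiction driven by the choice of $k$ in \eqref{criteria: relax_k}. First, Proposition \ref{le: no_int} immediately rules out any equilibrium of the closed-loop system \eqref{sys: cl} in $\textrm{Int}(\calC)\setminus\{\mathbf{0}\}$, so any nonzero equilibrium $\bfx^* \in \calD$ must lie on $\partial\calC$. At such a point the equilibrium condition $\dot{\bfx} = \mathbf{0}$ is precisely \eqref{equilibria-cond}, and it remains to identify which branch of the CBF complementarity constraint is active and then bound $\bar{u}_h(\bfx^*)$.

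Next I would rule out $\bar{u}_h(\bfx^*) = 0$. Substituting into \eqref{equilibria-cond} yields $\bfF_u(\bfx^*) = \mathbf{0}$, but the argument in case (i) of the proof of Proposition \ref{le: no_int}, based on the CLF condition \eqref{DCP: CLC} together with $\alpha_l(V(\bfx))>0$ for $\bfx \neq \mathbf{0}$, shows that $\bfF_u(\bfx) \neq \mathbf{0}$ for every $\bfx \in \calD \setminus \{\mathbf{0}\}$. Hence $\bar{u}_h(\bfx^*) > 0$, and since $k > 0$ from \eqref{criteria: relax_k}, \eqref{equilibria-cond} forces $-\bfF_u(\bfx^*)$ and $\bfG(\bfx^*)\bfw_p(\bfx^*)$ to be linearly dependent with the same orientation while $\bfF_u(\bfx^*) \neq \mathbf{0}$. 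This places $\bfx^*$ in $\Omega$, and combined with $\bar{u}_h(\bfx^*) > 0$, in $\calX$.

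Finally, I would upper-bound $\bar{u}_h(\bfx^*)$ by $\nu$ by taking norms in \eqref{equilibria-cond}. Suppose for contradiction that $\bar{u}_h(\bfx^*) \geq \nu$; then
\begin{equation*}
\|\bfF_u(\bfx^*)\| = k\,\bar{u}_h(\bfx^*)\,\|\bfG(\bfx^*)\bfw_p(\bfx^*)\| \geq k\,\nu\,\inf_{\bfx\in\calX}\|\bfG(\bfx)\bfw_p(\bfx)\|,
\end{equation*}
where the last inequality uses $\bfx^* \in \calX$. By \eqref{criteria: relax_k} the right-hand side strictly exceeds $\sup_{\bfx\in\calX}\|\bfF_u(\bfx)\| \geq \|\bfF_u(\bfx^*)\|$, a contradiction. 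Hence $\bar{u}_h(\bfx^*) \in (0,\nu)$ and $\bfx^* \in \calQ$. The argument is short because most of the work is already done by Proposition \ref{le: no_int} and by the construction leading to \eqref{criteria: relax_k}; the only mild subtlety is that the chain above presupposes $\calX \neq \emptyset$ and $\inf_{\bfx\in\calX}\|\bfG(\bfx)\bfw_p(\bfx)\|>0$, both of which are already in force by the discussion preceding \eqref{criteria: relax_k}, and if $\calX = \emptyset$ the claim is vacuously true.
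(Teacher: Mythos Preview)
Your proposal is correct and follows essentially the same route as the paper: invoke Proposition~\ref{le: no_int} to localize any nonzero equilibrium to $\partial\calC$ with $\bar{u}_h>0$ (hence to $\calX$), then take norms in \eqref{equilibria-cond} and use \eqref{criteria: relax_k} to rule out $\bar{u}_h\geq\nu$ by contradiction. Your version is in fact more self-contained than the paper's, which relegates the steps placing $\bfx^*$ in $\calX$ to the discussion preceding the proposition and records only the norm inequality in the proof proper.
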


\begin{proof}
For all $\bfx \in \calX$, \eqref{criteria: relax_k} implies $k \nu \| \bfG(\bfx) \bfw_p \| \geq k \nu \inf_{\bfx \in \calX} \| \bfG(\bfx) \bfw_p \| > \sup_{\bfx \in \calX} \| \bfF_u(\bfx) \| \geq \| \bfF_u(\bfx) \|$. Thus, for $\bfx \in \calX$ such that $\bar{u}_h \geq \nu$, the inequality $k \bar{u}_h \| \bfG(\bfx) \bfw_p \| > \| \bfF_u(\bfx) \|$ holds and implies that \eqref{equilibria-cond} does not hold. From this and Proposition~\ref{le: no_int}, non-zero eqilibria may only occur in $\calQ = \calX \setminus \{\bfx \in \Omega \mid \bar{u}_h \geq \nu\}$.
\end{proof}
\begin{remark}
Our method of using $\bfw_l$, $\bfw_h$, $\bfw_p$ to parameterize the degrees of freedom in the control law can be formulated as a quadratic program, with cost function $\frac{1}{2}(\bar{u}^2_l + \bar{u}^2_h)$ and constraints $-F_l(\bfx) - \mathcal{L}_{\bfG}V(\bfx)  \bfw_l\bar{u}_l \geq 0, \ F_h(\bfx) + \mathcal{L}_{\bfG}h(\bfx)  [\bfw_l\bar{u}_l + \bfw_h\bar{u}_h] \geq 0$.
\end{remark}
Proposition \ref{local_equilibria} suggests that the set $\calQ$ of potential equilibria on $\partial\calC$ can be shrunk towards a small set $\calS$, defined above \eqref{criteria: relax_k}, by increasing $k$ in \eqref{sel: bfw}.
Hence, our CLF-CBF-DCP controller can avoid all undesirable equilibria in $\textrm{Int}(\calC)$ and some on $\partial \calC$. If $\bfF_u(\bfx)$ and $\bfG(\bfx)\bfw_p(\bfx)$ are linearly independent for all $\bfx \in \calX$, then $\calQ$ is empty and no local equilibria exist on $\partial \calC$. Compared with \cite{tan2021undesired}, which only requires existence of valid CLF and CBF, our formulation has two additional assumptions: $\calL_{\bfG}h(\bfx) \neq \mathbf{0}$ (Assumption~\ref{assumption:cbf}) and $\rank(\bfG(\bfx)) \geq 2$ (Assumption~\ref{assumption-rkG}). Assumption~\ref{assumption:cbf} may be relaxed by considering cases $a(\bfy_0) > 0$, $d(\bfy_0) = 0$ and $a(\bfy_0)=d(\bfy_0) = 0$ in Proposition~\ref{prop: DCP_uniqueness}, which we leave for future work. It is not possible to relax Assumption~\ref{assumption-rkG} because our approach relies on the extra degrees of freedom to define a direction $\bfG(\bfx)\bfw_p(\bfx)$ avoiding undesired equilibria.

\section{Evaluation} 
\label{sec:evaluation}

We compare our \emph{CLF-CBF DCP} with the \emph{relative CLF-CBF QP} in \cite{tan2021undesired}, the \emph{adaptive CLF-CBF QP} in \cite{pol2022roa} and the \emph{Lyapunov shaping} method in \cite{reis2020control}. Consider three scenarios for a fully actuated system, $\dot{\bfx} = \bfx + \bfu$, with $\bfx,\bfu \in \bbR^2$.
\begin{itemize}[align=left,leftmargin=1em]
    \item[Case 1:] We use CBF $h(\bfx) \!=\! x_1^2 + (x_2 - 4)^2 \!- 4$ with $\alpha_h(h) = h$ and CLF $V(\bfx) = \frac{1}{2} (6x_1^2 + x_2^2)$ with $\alpha_l(V) = V$.
    
    
    \item[Case 2:] We use CBF $h(\bfx) = \big( (x_1-2)^2+(x_2-3)^2 \big)\big( (x_1+2)^2+(x_2-3)^2 \big) - 2.1^4$ with $\alpha_h(h) = h$ and CLF $V(\bfx) = \frac{1}{2} (x_1^2 + x_2^2)$ with $\alpha_l(V) = V$. 
    
    \item[Case 3:] We use CBF $h(\bfx) = \big( (x_1-2)^2+(x_2-3)^2 \big)\big( (x_1+2)^2+(x_2-3)^2 \big) - 2.1^4$ with $\alpha_h(h) = h$ and CLF $V(\bfx) = \frac{1}{2} (x_1^2 + 2x_2^2)$ with $\alpha_l(V) = V$.
\end{itemize}

\begin{figure*}[t]
	\centering
	\subcaptionbox{Case 1 \label{fig: cir_6_ck}}{\includegraphics[width=0.32\linewidth,trim=13mm 6.5mm 10mm 8mm, clip]{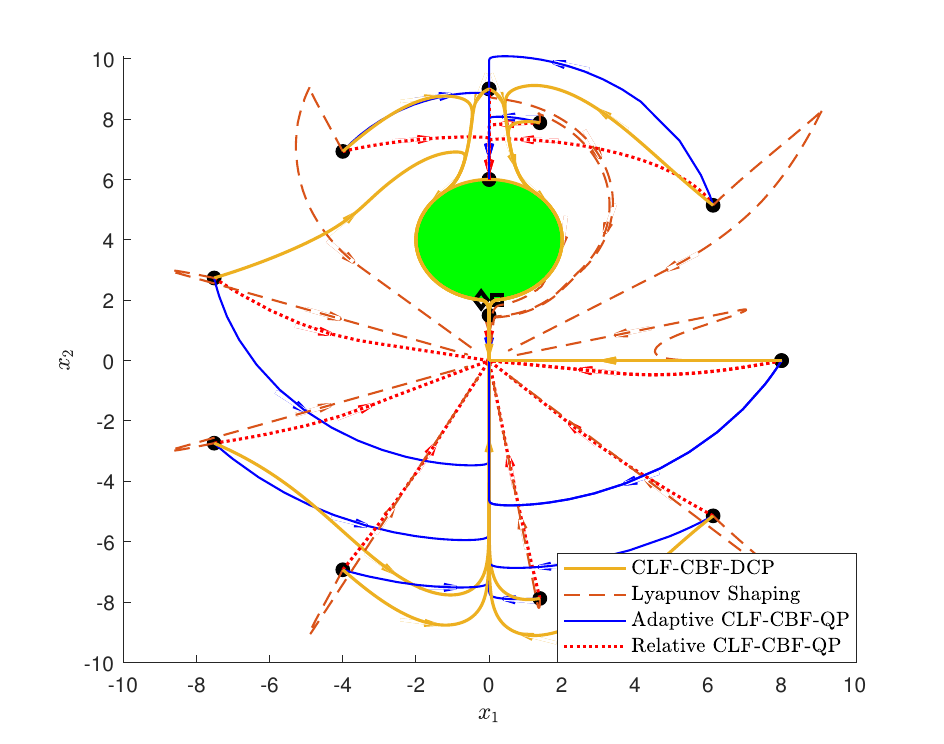}}%
	\hfill%
	\subcaptionbox{Case 2 \label{fig: con_ck}}{\includegraphics[width=0.32\linewidth,trim=13.5mm 6mm 10mm 8mm, clip]{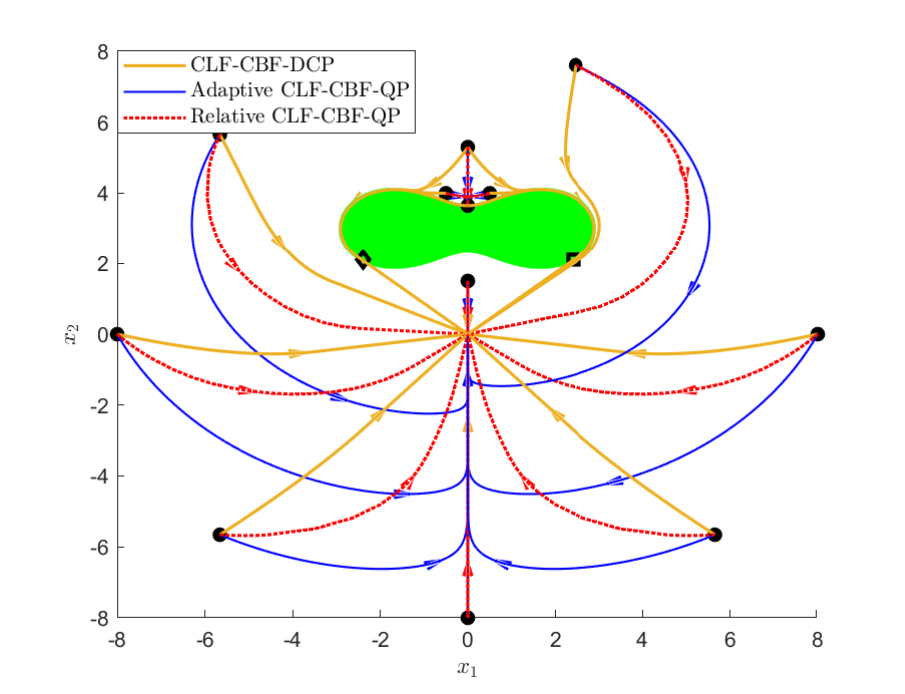}}%
	\hfill%
	\subcaptionbox{Case 3 \label{fig: con_nr}}{\includegraphics[width=0.32\linewidth,trim=12mm 5mm 10mm 7mm, clip]{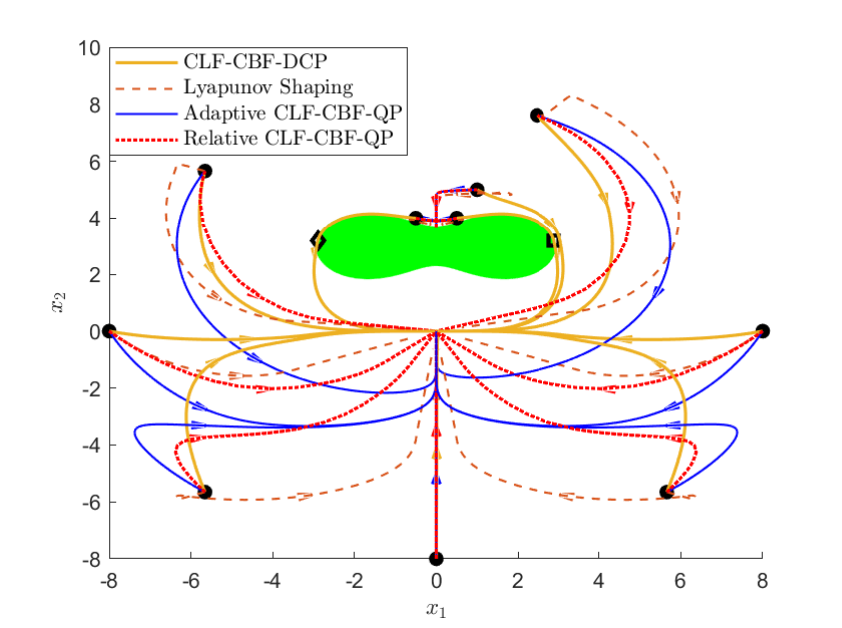}}%
    \caption{Comparison of our CLF-CBF-DCP controller to controllers obtained from the relative CLF-CBF-QP \cite{tan2021undesired}, adaptive CLF-CBF-QP \cite{pol2022roa}, and the Lyapunov shaping approach \cite{reis2020control} with various initial conditions (\textbf{black} dots) and obstacles (\textbf{\textcolor{green}{green}}) defined by the three cases in Sec.~\ref{sec:evaluation}. In Case 1, the relative and adaptive CLF-CBF-QP methods converge to an undesired local equilibrium $(0,6)$ on the safe set boundary with initial conditions satisfying $x_2(0) > 4$. In Case 2 and 3, these methods converge to the same boundary local equilibrium $(0,3.65)$ under initial conditions $(\pm0.5, 4)$.
    The Lyapunov shaping method avoids boundary equilibria for Case 1 but fails for Case 3, where it converges to boundary equilibria from initial conditions $(\pm0.5, 4)$. For all other initial conditions in Case 1 and 3, the Lyapunov shaping method converges to interior equilibria that are not the origin. The Lyapunov shaping method cannot be applied in Case 2 because the Lyapunov function is radial. In contrast, the CLF-CBF-DCP controller achieves both safety and stability at the origin in all three cases.}
	\label{fig: compare}
\end{figure*}

In each case, there are two choices of $\bfw_p$ satisfying \eqref{wp-cond} with opposite signs. In the simulations, for some interesting initial conditions, we show the closed-loop CLF-CBF-DCP trajectory for both choices. The trajectories of the four controllers are shown in Fig.~\ref{fig: compare}. The baseline methods converge to undesirable equilibria for some initial conditions, depending on the safe set shape. Our DCP controller achieves both safety and asymptotic stability at the origin for all tested initial conditions. The diamond and square points in Fig.~\ref{fig: compare} depict the locations of the set $\calS$, which is determined by $\bfw_p(\bfx)$. If the system trajectory passes through the \emph{diamond} point, then only the \emph{square} point belongs to $\calS$, and vice versa. 
Fig.~\ref{cir: k_test} shows the effect of the term $k$ in \eqref{sel: bfw} on the DCP controller. For small values of $k$ in Fig.~\ref{cir: k_test}(a), the trajectory gets stuck at an undesired local equilibrium, while for larger $k$, in Fig.~\ref{cir: k_test}(b), it converges to the origin. 


Our CLF-CBF DCP and the relative CLF-CBF QP \cite{tan2021undesired} introduce no local equilibria in $\textrm{Int}(\calC)$, while the adaptive CLF-CBF QP \cite{pol2022roa} needs to satisfy \cite[Corollary 5.4]{pol2022roa} to remove local equilibria in $\textrm{Int}(\calC)$. Fig.~\ref{fig: cir_6_ck} and Fig.~\ref{fig: con_nr} show that Lyapunov shaping \cite{reis2020control} fails to address local equilibria in $\textrm{Int}(\calC)$. The relative CLF-CBF QP can get stuck at local equilibria on $\partial \calC$ if $L_{\bfG}h(\bfx) \neq \mathbf{0}$ \cite[Theorem~3.(3)]{tan2021undesired} and so can the adaptive CLF-CBF QP if \cite[Remark 2]{pol2022roa} holds. Lyapunov shaping avoids local equilibria on $\partial \calC$ caused by collinearity among the vectors $f(\bfx)$, $\bfG(\bfx)\nabla V(\bfx,Q)$ and $\bfG(\bfx)\nabla h(\bfx)$, as shown in Fig.~\ref{fig: compare}. Our CLF-CBF-DCP formulation has a closed-form solution, which can be obtained in constant time and is suitable for real-time operation.

\begin{figure}[t]
    \vspace*{-1ex}
    \subcaptionbox{$k = 14.6$ at the \textit{plus} point\label{fig: k_test_fail}}{\includegraphics[width=0.5\linewidth,trim=13mm 7mm 12mm 8mm, clip]{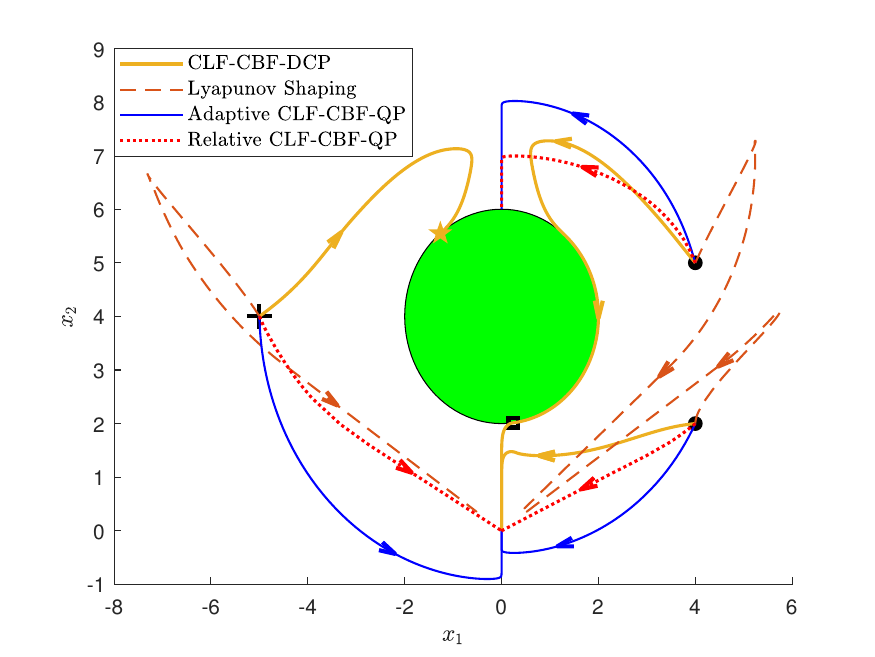}}%
    \hfill%
    \subcaptionbox{$k = 15$ at the \textit{plus} point\label{fig: k_test_succ}}{\includegraphics[width=0.5\linewidth,trim=13mm 7mm 12mm 8mm, clip]{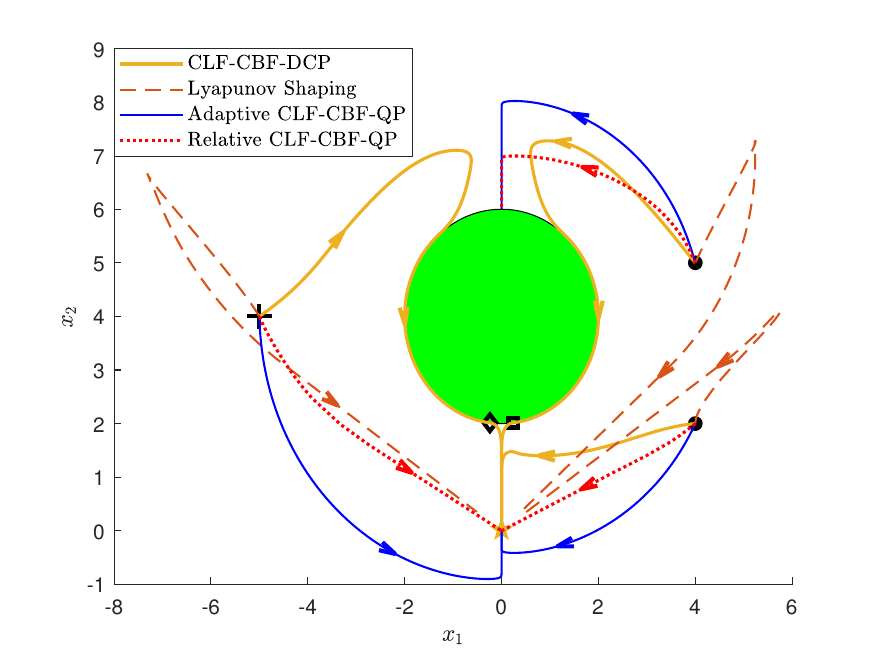}}%
    \caption{The CLF-CBF-DCP controller with different choices of $k$ initialized at the \textbf{black} plus point $(-5,4)$. Fig.~\ref{fig: k_test_fail} shows that if $k$ is not sufficiently large, a local equilibrium exists on the obstacle boundary at the \textbf{\textcolor{orange}{orange}} star point. Fig.~\ref{fig: k_test_succ} shows that this equilibrium can be eliminated by increasing $k$. }
    \label{cir: k_test}
\end{figure}

\section{Conclusion} 
We formulated control synthesis with CLF stability and CBF safety constraints as a DCP. We proved that the CLF-CBF-DCP control law is Lipschitz continuous and characterized the closed-loop system equilibria. We demonstrated how an extra forcing term may eliminate undesired equilibria that CLF-CBF-QP techniques encounter. 

	\bibliographystyle{cls/IEEEtran}
	\bibliography{ref/BIB_ICRA22.bib}
\end{document}